\newtheorem{theorem}{Theorem}[section]
\newtheorem{proposition}[theorem]{Proposition}
\newtheorem{corollary}[theorem]{Corollary}
\newtheorem{lemma}[theorem]{Lemma}
\newtheorem{remark}[theorem]{Remark}
\newcommand{\qed}{\protect\nolinebreak{\hfill$\Box$}}
\newenvironment{proof}{\prepf\rm}\qed{\endprepf}
\def\A{\mathbf A}
\def\a{\mathbf a}
\def\G{\mathbf G}
\def\M{\mathbf M}
\def\X{\mathbf X}
\def\x{\mathbf x}
\def\y{\mathbf y}
\def\z{\mathbf z}
\def\0{\mathbf 0}
\def\cB{\mathcal B}
\def\cH{\mathcal H}
\def\cK{\mathcal K}
\def\cS{\mathcal S}
\def\PG{{\rm PG}}
\def\PGL{{\rm PGL}}
\def\GF{{\rm GF}}
\def\GL{{\rm GL}}
\def\dim{{\rm dim}}
\def\frob{{\rm Fr}}
\def\GF{{\rm GF}}
\newcommand\comment[1]{}
\title{The geometry of elation groups of a finite projective space}
\author{N. Durante and A. Siciliano}
\begin{document}

%\date{}

\maketitle

\begin{abstract}
We study the geometry of point-orbits  of elation groups with a given center and axis of a finite projective space. We show that there exists a 1-1 correspondence from  conjugacy classes of  such groups and orbits on projective subspaces (of a suitable dimension) of Singer groups of  projective spaces. Together with a recent result of Drudge \cite{kd} we establish  the number of these elation groups. 
\end{abstract}
 
\bigskip

%%%%%%%%%%%%%%%%%%%%%%%%%%%%%%%%%%%%%%%%%%%%%%%%%%%%%
%%%%%%%%%	INTRODUCTION 		%%%%%%%%%%%%%%%%%%
%%%%%%%%%%%%%%%%%%%%%%%%%%%%%%%%%%%%%%%%%%%%%%%%%%%%%%%%%%

\section{Introduction} 
Let $\PG(r-1,p^h)$ be the projective space coordinatized by the finite field $\GF(p^h)$, $p$  a prime and $h$ a positive integer. In  $\PG(r-1,p^h)$, collineations fixing all the points of a hyperplane $A$ are called {\em axial collineations} (or {\em perspectivities}) with {\em axis} $A$. For such collineations there is always a {\em center} $\z$, i.e. a point such that all lines through this point are invariant. A collineation with axis $A$ and center $\z$ is called a $(\z,A)-${\em perspectivity} or a $(\z,A)-${\em central collineation}.   If, for a $(\z,A)-$perspectivity $g$, the image $g(\x)$ of a single point $\x$ different from the center and not lying on the axis is known, then the image of all other points may be obtained by a well-known geometric construction. In particular, if $g$ is not the identity, then $g$ has no fixed point besides $\z$ and the points of $A$. Therefore, a non-identical perspectivity has a unique axis and a unique center. A $(\z,A)-$perspectivity  is called an {\em elation} or a {\em homology} depending on whether $\z\in A$ or $\z\notin A$. The non-identical elations with axis $A$ are just the perspectivities without any fixed point outside $A$. Therefore they form, togheter with the identity, a normal subgroup of the full  collineation group   of the projective space fixing $A$. This elation group acts fix-point-free (that is, semiregularly) outside $A$. In particular the set of all $(\z,A)-$perspectivities is a subgroup of this latter group.

If $g$ is any perspectivity, $\langle g\rangle$ induces a semiregular permutation group on the non-fixed points of any line not contained in the axis through the center. If $k$ is the order of $g$, then $g$ is either an elation or a homology if  either $k|p^h$ or $k|p^h-1$, respectively. Thus, any $(\z,A)-$elation group is an elementary abelian $p-$group.

In this paper, we are interested in the geometry of the point-orbits of a $(\z,A)-$elation group of $\PG(r-1,p^h)$. The motivation for us to study this problem is that this type of elation groups play an important role in finite geometries as they appear as automorphism groups of relevant objects such as unitals, blocking sets and maximal arcs in projective spaces.

%%%%%%%%%%%%%%%%%%%%%%%%%%%%%%%%%%%%%%%%%%%%%%%%%%%%%%%%%%%%%%%%%%%
%%%%%%%%%	The geometry of  $(\z,A)-$elation groups  %%%%%%%%%%%%%%
%%%%%%%%%%%%%%%%%%%%%%%%%%%%%%%%%%%%%%%%%%%%%%%%%%%%%%%%%%%%%%

\section{The geometry of  $(\z,A)-$elation groups} \label{sec_1}

Let $E$ denote a $(\z,A)-$elation group in $\PG(r-1,p^h)$. To study the geometry of the orbits on the non-fixed points of $E$  it is more convenient to introduce  homogeneous projective coordinates  $(X_0,\ldots,X_{r-1})$ of $\PG(r-1,p^h)$ in such a way that $\z=(0,0,\ldots,0,1)$ and $A$ has equation $X_0=0$. For a fixed basis of the underlying vector space $V(r,p^h)$, the elements of  $E$ have representation by a lower triangular matrix
\[
\M_\lambda=\begin{pmatrix}
1 & 0 &\ldots & 0 & 0\\
0 & 1 & \ldots & 0 & 0\\
\vdots &\vdots&\ddots &\vdots &\vdots\\
\lambda &	0 &\ldots&0&1	
\end{pmatrix}
\]
for some $\lambda\in\GF(p^h)$; we will denote by $e_\lambda$ the element of $E$ with associated matrix $\M_\lambda$.  As the group operation of $E$ is defined by $e_\lambda*e_{\lambda'}=e_{\lambda+\lambda'}$, we can identify  $E$ with the  additive subgroup $H_E=\{\lambda:e_\lambda\in E\}$ of $\GF(p^h)$. We say that two additive subgroups of $\GF(p^h)$ are {\em equivalent} if they differ by a non-zero scalar in $\GF(p^h)$. We say that two $(\z,A)-$elation groups of $\PG(r-1,p^h)$ are {\em equivalent} if the corresponding additive subgroups are equivalent.

\begin{lemma} \label{lem_1}
In $\PG(r-1,p^h)$ two $(\z,A)-$ elation groups are conjugate in $\PGL(r,p^h)$ if and only if they are equivalent.
\end{lemma}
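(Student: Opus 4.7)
The plan is to split the biconditional into its two implications and, in both, to exploit the structural fact that any element of $\PGL(r,p^h)$ conjugating one $(\z,A)$-elation group onto another must fix $\z$ and stabilize $A$ setwise. Indeed, a non-identity perspectivity has a unique center and a unique axis (as recorded in the introduction), so conjugating a $(\z,A)$-elation by $g$ yields an elation with center $g(\z)$ and axis $g(A)$; if the conjugate again belongs to a $(\z,A)$-elation group, then $g(\z)=\z$ and $g(A)=A$. Translated into coordinates, this forces any matrix $\P \in \GL(r,p^h)$ representing $g$ to have first row of the form $(a,0,\ldots,0)$ and last column of the form $(0,\ldots,0,c)^T$ for some $a,c \in \GF(p^h)^{*}$.

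For the ``equivalent implies conjugate'' direction, suppose $H_{E'} = \alpha H_E$ with $\alpha \in \GF(p^h)^{*}$. Conjugation by the diagonal matrix $\P=\diag(1,\ldots,1,\alpha)$ scales only the $(r,1)$-entry of $\M_\lambda$, multiplying it by $\alpha$, so $\P\M_\lambda\P^{-1}=\M_{\alpha\lambda}$ and hence $\P E \P^{-1}=E'$.

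For the converse, I would write $\M_\lambda = I + \lambda N$, where $N$ is the rank-one matrix with a single $1$ in position $(r,1)$. If $e_1,\ldots,e_r$ denote the standard basis vectors of $V(r,p^h)$, then $N=e_r e_1^T$. The block form of $\P$ established above yields $\P e_r = c\,e_r$, while the first-row condition on $\P$ gives $e_1^T\P^{-1}=a^{-1}e_1^T$. The one-line rank-one computation $\P N \P^{-1} = (c/a)N$ then produces $\P\M_\lambda\P^{-1}=\M_{(c/a)\lambda}$ for every $\lambda \in H_E$, so $H_{E'}=(c/a)H_E$ and the additive subgroups are equivalent.

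The only delicate step is extracting the precise block form of $\P$ from preservation of $\z$ and $A$; beyond that, each direction collapses to a short matrix computation. The underlying reason the argument works is that conjugation by the full $(\z,A)$-stabilizer in $\PGL(r,p^h)$ acts on elations through the single multiplier $c/a \in \GF(p^h)^{*}$, which is why geometric conjugacy coincides exactly with equivalence of the associated additive subgroups.
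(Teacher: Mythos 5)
Your proof is correct, and it follows the same overall strategy as the paper: identify how conjugation acts on the parameter $\lambda$ of $\M_\lambda$ and observe that it is multiplication by a fixed nonzero scalar, then exhibit an explicit conjugating matrix for the converse. The one genuine difference is in how the block form of the conjugator is obtained in the forward direction. The paper takes an arbitrary matrix $\G$ and lets the commutation condition $\G\M_\lambda=\rho\M_{\bar\lambda}\G$ force the off-corner entries to vanish, reading off $H_{\bar E}=a_{rr}a_{11}^{-1}H_E$ from that computation; you instead invoke the uniqueness of center and axis of a non-identity perspectivity to conclude a priori that $g$ fixes $\z$ and stabilizes $A$, which immediately gives the first-row/last-column shape of $\P$, and then the rank-one decomposition $\M_\lambda=I+\lambda\, e_re_1^T$ makes $\P\M_\lambda\P^{-1}=\M_{(c/a)\lambda}$ a one-line identity (note in passing that projective equality of two unipotent matrices $\M_\mu$ forces the scalar $\rho$ to be $1$, so the multiplier really is $c/a$; and the geometric step tacitly assumes $E$ is nontrivial, the trivial case being vacuous). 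Your route buys a cleaner and more conceptual argument that explains \emph{why} only the ratio of the corner entries matters; the paper's route is self-contained linear algebra that never needs the center/axis uniqueness, and as a by-product re-derives the stabilizer structure of $(\z,A)$ inside $\PGL(r,p^h)$. Your converse via $\diag(1,\ldots,1,\alpha)$ is a special case of the paper's lower-triangular conjugator and is equally valid.
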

\begin{proof}
Let $E$ and $\bar E$ be two conjugate $(\z,A)-$elation groups, i.e $gEg^{-1}=\bar E$,  for some $g\in\PGL(r,p^h)$. Let
\[
\G=\begin{pmatrix}
a_{11} & \a_1 & a_{1r}\\
\a_{2}' & \A & \a_{3}'\\
a_{r 1} &	\a_{4} &a_{r r}
\end{pmatrix}
\]
be a matrix representation of $g$ with $a_{11},a_{rr}\in \GF(p^h)\setminus\{0\}$, $a_{1r},a_{r1}\in\GF(p^h)$, $\a_{i}\in\GF(p^h)^{r-1}$ and $\A$ is a $(r-1)\times(r-1)$ matrix with entries in $\GF(p^h)$; here $\a'$ denote the traspose of $\a$. The equality $gE=\bar Eg$ in $\PGL(r,p^h)$ implies that for every $e_\lambda\in E$ there exist $e_{\bar\lambda}$ and $\rho\neq0$ such that $\G\M_\lambda=\rho\M_{\bar\lambda}\G$. A straightforward computation shows that   $\a_1=\a_3=\0, a_{1 r}=a_{r1}=0$ and $H_{\bar E}=a_{rr}\cdot a_{11}^{-1} H_E$. As $a_{11}\neq 0$ we can set $a_{11}=1$ so that $H_{\bar E}=a_{rr} H_E$ i.e., $E$ and $\bar E$ are  equivalent. 

Conversely,  if $E$ and $\bar E$ are equivalent, that is $H_{\bar E}=\alpha H_E$ for some $\alpha\in\GF(p^h)\setminus\{0\}$, then $gEg^{-1}=\bar E$ for every element $g$ of $\PGL(r,p^h)$ having matrix representation  
\[
\begin{pmatrix}
1 & 0 & \ldots & 0 & 0\\
a_{21} & a_{22} & \ldots & 0 & 0\\
\vdots &\vdots& & \ddots &\vdots\\
a_{r 1} &	a_{r 2} &\ldots&a_{r r-1}&\alpha
\end{pmatrix}.
\]
\end{proof}

Let $E$ be a $(\z,A)-$elation group of order $p^m$, $1\le m\le h$, of $\PG(r-1,p^h)$. Since we can identify $E$ with the associated additive subgroup $H_E$ of $\GF(p^h)$, it turns out that  $E$ is an $m-$dimensional $\GF(p)-$vector space.  Let $\GF(p^n)$ be a subfield of $\GF(p^h)$ such that we can consider $E$ as a $\GF(p^n)-$vector space. Then we will say that $E$ has {\em  dimension} $d=m/n$  over $\GF(p^n)$ and we write $\dim_{p^n}(E)=d$. If $\GF(p^n)$ is maximal with respect to the above property then we say that $d=m/n$ is the {\em minimal} dimension of $E$.

If $E$ has dimension $d$ over $\GF(p^n)$,  we set $d'=h/n$. Then, it becomes quite natural to represent $\PG(r-1,p^h)$ into $\PG((r-1)d',p^n)$ by using the well-known  {\em Andr\'e/Bruck-Bose representation} which we briefly recall.

A $(t-1)-${\em spread} of $\PG(s-1,p^n)$ is a set $\cS$ of $(t-1)$-dimensional subspaces such that every point of $\PG(s-1,p^n)$ is contained in exactly one element of $\cS$. A $(t-1)-$spread exists if and only if $t|s$.  We will say that a $(t-1)-$spread $\cS$ of $\PG(s-1,p^n)$ fills a $(m-1)-$dimensional subspace $\Lambda$ if any element of $\cS$ is either contained in $\Lambda$ or disjoint from $\Lambda$. A $(t-1)-$spread is said to be {\em Desarguesian } (or {\em normal}) if, taking as $(m-1)-$dimensional subspaces the set of $(mt-1)-$dimensional subspaces filled by $\cS$, for $m=1,2,\dots,r/t$, and the inclusion inherited from $\PG(s-1,p^n)$ gives a projective space $\PG(s/t-1,p^{nt})$.

 In $\PG((r-1)d',p^n)$ we choose homogeneous projective coordinates 
 \[(X_{00},X_{11},\ldots,X_{1d'},\ldots,X_{21},\ldots,X_{2d'},\ldots,X_{r-11},\ldots,X_{r-1d'}).\]
Let $\x=(1,x_1,\ldots,x_{r-1})$ be an affine point of $\PG(r-1,p^h)$. If the element $x_i$ of $\GF(p^h)$ is replaced by its corresponding $d'$-ple $(x_{i1},\ldots,x_{id'})$ over $\GF(p^n)$, then $\x$ defines the affine point $\x^*=(1,x_{11},\ldots,x_{1d'},\ldots,x_{r-11},\ldots,x_{r-1d'})$ of $\PG((r-1)d',p^n)$, and conversely.

Now we extend  the above map  to a map $*$ from $\PG(r-1,p^h)$ into $\PG((r-1)d',p^n)$ by taking points of the hyperplane  $A$  into elements of a Desarguesian $(d'-1)-$spread $\cS$ of the hyperplane $A^*$ of $\PG((r-1)d',p^n)$.  Then it is possible to define a point-line geometry $\Pi(\cS)$ in the following way: (i) the {\em points} are the points of $\PG((r-1)d',p^n)\setminus A^*$ ({\em affine points}) and the elements of $\cS$ ({\em point at infinity}), (ii) the {\em lines} are the $d'-$dimensional subspaces of $\PG((r-1)d',p^n)$ intersecting $A^*$ in an element of $\cS$ ({\em affine lines}) and the $(2d'-1)-$dimensional subspaces generated by two distinct elements of $\cS$ ({\em lines at infinity}), (iii) the {\em point-line incidences} are inherited from $\PG(r-1,p^h)$. It turns out that the incidence structure $\Pi(\cS)$ is isomorphic to $\PG(r-1,p^h)$. 

In the following $A^*$ will be the hyperplane at infinity of $\PG((r-1)d',p^n)$. It is clear that the center $\z$ defines the $(d'-1)-$dimensional subspace $\z^*$ of $A^*$. We choose a basis of the underlying vector space $V((r-1)d'+1,p^n)$ of $\PG((r-1)d',p^n)$ such that  $A^*:X_{00}=0$, $\z^*:X_{00}=X_{11}=\ldots=X_{1d'}=\ldots=X_{r-21}=\ldots=X_{r-2d'}=0$.

\begin{lemma}
Let  $E$ be a $(\z,A)-$elation group of order $p^m$ and dimension $d=m/n$ of $\PG(r-1,p^h)$. Under the correspondence ${}^*$ the orbit $\x^E$  of the affine point $\x$ of $\PG(r-1,p^h)$ defines a $d-$dimensional affine subspace of $\PG((r-1)h/n,p^n)$. On the hyperplane $A^*$, $E$ induces the identity.
\end{lemma}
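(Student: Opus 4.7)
The strategy is direct: compute $\x^E$ from the matrix $\M_\lambda$, then push the orbit through ${}^*$ using that this correspondence realises the identification $\GF(p^h)\cong\GF(p^n)^{d'}$ as an isomorphism of $\GF(p^n)$-vector spaces.

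First I would read off $\x^E$ from the matrix form of $\M_\lambda$ displayed in Section~\ref{sec_1}. The action of $e_\lambda$ fixes every coordinate of $\x=(1,x_1,\ldots,x_{r-1})$ except the last, so
\[
\x^E=\bigl\{(1,x_1,\ldots,x_{r-2},x_{r-1}+\lambda):\lambda\in H_E\bigr\},
\]
i.e.\ the orbit moves only along the line $\x\z$, by additive translations drawn from $H_E$.

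Next, let $\phi:\GF(p^h)\to\GF(p^n)^{d'}$ be the $\GF(p^n)$-linear isomorphism that sends $y\in\GF(p^h)$ to its $d'$-tuple $(y_1,\ldots,y_{d'})$ relative to a fixed $\GF(p^n)$-basis of $\GF(p^h)$; this is the map built into the definition of ${}^*$. Additivity of $\phi$ gives $\phi(x_{r-1}+\lambda)=\phi(x_{r-1})+\phi(\lambda)$, whence
\[
(\x^E)^{*}=\x^{*}+\bigl\{(\0,\ldots,\0,\phi(\lambda)):\lambda\in H_E\bigr\},
\]
with $\phi(\lambda)$ occupying the last $d'$ coordinates and all the others fixed. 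By hypothesis $E$ has dimension $d$ over $\GF(p^n)$, so $H_E$ is a $d$-dimensional $\GF(p^n)$-subspace of $\GF(p^h)$; hence $\phi(H_E)$ is a $d$-dimensional $\GF(p^n)$-subspace of $\GF(p^n)^{d'}$, and $(\x^E)^{*}$ is the affine translate of this subspace by $\x^{*}$. This produces the claimed $d$-dimensional affine subspace of $\PG((r-1)d',p^n)$.

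For the second assertion, the same bookkeeping extends $e_\lambda$ to a collineation of $\PG((r-1)d',p^n)$ whose matrix is the identity except for the entries of $\phi(\lambda)$ placed in the first column, in the rows indexed by $X_{r-1,1},\ldots,X_{r-1,d'}$. On an affine point ($X_{00}=1$) this reproduces the coordinate shift described above; on any point of $A^{*}$ ($X_{00}=0$) the matrix acts as the identity, so $E$ fixes $A^{*}$ pointwise. The only (mild) obstacle is really the middle step: one must fix the $\GF(p^n)$-basis of $\GF(p^h)$ implicit in ${}^*$ and verify that addition in $H_E\subseteq\GF(p^h)$ corresponds to componentwise $\GF(p^n)$-addition in the last $d'$ coordinates. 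Once the linearity of $\phi$ is in hand, both conclusions follow by reading off the resulting matrix.
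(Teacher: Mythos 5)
Your proposal is correct and follows essentially the same route as the paper: compute the orbit $\x^E=\{(1,x_1,\ldots,x_{r-1}+\lambda):\lambda\in H_E\}$, transfer it through ${}^*$ using the $\GF(p^n)$-linearity of the identification $\GF(p^h)\cong\GF(p^n)^{d'}$, and conclude from $\dim_{\GF(p^n)}H_E=d$ that the image is a $d$-dimensional affine subspace, with the pointwise fixing of $A^*$ checked by the explicit induced matrix. The paper leaves the last step as ``easy to check,'' so your explicit matrix verification simply fills in that detail.
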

\begin{proof}
Set $d'=h/n$. The $E-$orbit of the point $\x=(1,x_1,\ldots,x_{r-1})$ is $\x^E=\{(1,x_1,\ldots,x_{r-1}+\lambda):\lambda\in H_E\}$.  If $(\lambda_1,\ldots,\lambda_{d'})$ is the $d'-$ple  over $\GF(p^n)$ associated with $\lambda$ then $\x^E$ defines the set of points $(\x^E)^*=\{\y^*=(1,x_{11},\ldots,x_{1d'},\ldots,x_{r-11}+\lambda_1,\ldots,x_{r-1d'}+\lambda_{d'})):\y\in \x^E\}$. Since $H_E$ is a $d-$dimensional vector spaces over $\GF(p^n)$  we get that  $(\x^E)^*$ is a $d-$dimensional affine subspace of $\PG((r-1)d',p^n)$.

It is easy to check that $E$ fixes pointwise $A^*$.
\end{proof}
\begin{lemma}\label{lem_2}
All $d-$dimensional subspaces $(\x^E)^*$, with $\x$ an affine point of $\PG(r-1,p^h)$, intersect $\z^*$ in a common  $(d-1)-$dimensional subspace.
\end{lemma}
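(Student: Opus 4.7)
My plan is to work directly in Andr\'e/Bruck-Bose coordinates: I will describe $(\x^E)^*$ explicitly, pass to its projective closure, identify the points at infinity (i.e.\ its intersection with $A^*$), and verify that these form the same $(d-1)$-dimensional subspace of $\z^*$ for every affine point $\x$.

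First, I will use the description of $(\x^E)^*$ from the preceding lemma: its affine points are
\[
(1,x_{11},\ldots,x_{1d'},\ldots,x_{r-2\,1},\ldots,x_{r-2\,d'},x_{r-1\,1}+\lambda_1,\ldots,x_{r-1\,d'}+\lambda_{d'})
\]
as $\lambda$ runs over $H_E$, where $(\lambda_1,\ldots,\lambda_{d'})$ is the $d'$-tuple over $\GF(p^n)$ associated with $\lambda$. Only the last $d'$ coordinates depend on $\lambda$, so the direction vectors of the affine subspace are exactly $(0,\ldots,0,\lambda_1,\ldots,\lambda_{d'})$ with $\lambda\in H_E$, and these are independent of $\x$.

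Next, I will form the set $\Pi_E\subset A^*$ of projective points arising from these directions. Since $H_E$ is a $d$-dimensional $\GF(p^n)$-vector space, $\Pi_E$ is a $(d-1)$-dimensional projective subspace of $A^*$, and it coincides with the intersection of the projective closure of $(\x^E)^*$ with the hyperplane at infinity. Because each representative vector has zero in every coordinate $X_{00}$ and $X_{ij}$ for $1\le i\le r-2$, which are precisely the defining equations of $\z^*$, we obtain $\Pi_E\subseteq\z^*$, and therefore $(\x^E)^*\cap\z^*=\Pi_E$. The crucial observation is that $\Pi_E$ depends solely on $H_E$ and not on $\x$, so the same $(d-1)$-dimensional subspace appears for every affine point.

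I do not expect any real obstacle: the argument is essentially a coordinate verification once the projective closure is taken. The only subtlety is keeping straight the Andr\'e/Bruck-Bose identification between $H_E\subseteq\GF(p^h)$ and its image in $\GF(p^n)^{d'}$, but this is fixed once and for all by the chosen basis and causes no difficulty.
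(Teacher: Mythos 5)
Your proposal is correct and follows essentially the same route as the paper: the points at infinity of $(\x^E)^*$ are exactly the direction points $(0,\ldots,0,\lambda_1,\ldots,\lambda_{d'})$, $\lambda\in H_E$, which lie in $\z^*$, form a $(d-1)$-dimensional subspace since $H_E$ is a $d$-dimensional $\GF(p^n)$-space, and are visibly independent of $\x$. The paper phrases this via the line $\langle\x^*,\y^*\rangle$ meeting $A^*$, but the content is the same coordinate verification.
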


\begin{proof}
Let $\x=(1,x_1,\ldots,x_{r-1})$ be an affine point of $\PG(r-1,p^h)$ and let $\y\in\x^E$. Then $\y=(1,x_1,\ldots,x_{r-1}+\lambda)$ for some $\lambda\in H_E$. The line $\langle \x^*,\y^*\rangle$ of $\PG((r-1)d',p^n)$ joining $\x^*$ and $\y^*$ meets $A^*$ in $(0,0,\ldots,0,\lambda_1,\ldots,\lambda_{d'})$ which is a point of $\z^*$. We see at once that $\langle \x^*,\y^*\rangle\cap \z^*$ does not depends neither on the choice of the affine point $\x$ of $\PG(r-1,p^h)$ nor on that of $\y\in\x^E$. As $\lambda$ varies in $H_E$, the points  $(0,0,\ldots,0,\lambda_1,\ldots,\lambda_{d'})$ are precisely the points of  a $(d-1)-$dimensional subspace of $\z^*$.
\end{proof}

Let $\cH$ be a $(d-1)-$dimensional subspace of $\z^*$ with equations (in $\z^*$)
\begin{equation}\label{eq_1}
\A\X_{r-1}=\0,
\end{equation}
 where $\A\in\GL(d',p^n)$ has rank $d'-d$ and $\X_{r-1}=(X_{r-11},\ldots,X_{r-1d'})^{t}$. 
 
The solutions of (\ref{eq_1}) form the underlying vector subspace $H$ of $\cH$. For every $d'-$ple $\mathbf\lambda=(\lambda_1,\ldots,\lambda_{d'})$ in $H$ consider its corresponding element $\lambda$ of $\GF(p^h)$. Let $E$ be the $(\z,A)-$elation group $\{e_\lambda:\mathbf\lambda\in H\}$ of $\PG(r-1,p^n)$. Then it is easy to check that, for every affine point $\x$ of $\PG(r-1,p^h)$,  the $d-$dimensional subspace $\langle\x^*,\cH\rangle$ of $\PG((r-1)d',p^n)$ is precisely $(\x^E)^*$.

Thus we have proved the following result.

\begin{proposition}\label{rem_1}
There exists a 1-1 correspondence from $(\z,A)-$elation groups of order $p^m$ and dimension $d=m/n$ of $\PG(r-1,p^h)$ and $(d-1)-$dimensional subspaces  contained in  $\z^*$.
\end{proposition}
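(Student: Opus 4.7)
The plan is to exhibit the claimed bijection as a pair of mutually inverse maps, both of which are essentially constructed in the preceding discussion; the core of the argument is to identify these maps and verify they are inverses of each other.

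For the forward direction, I would send an elation group $E$ to $\Phi(E) := (\x^E)^* \cap \z^*$, for any affine point $\x$ of $\PG(r-1,p^h)$. Lemma \ref{lem_2} guarantees that this is a well-defined $(d-1)$-dimensional subspace of $\z^*$, independent of the choice of $\x$. For the backward direction, I would use the construction that precedes the proposition: given a $(d-1)$-dimensional subspace $\cH$ of $\z^*$ cut out by $\A\X_{r-1}=\0$, let $H$ be its underlying $\GF(p^n)$-vector subspace of $\GF(p^n)^{d'}$, pull $H$ back to an additive subgroup $H_E$ of $\GF(p^h)$ via the fixed $\GF(p^n)$-basis identification $\GF(p^h) \cong \GF(p^n)^{d'}$ that underlies the Andr\'e/Bruck--Bose representation, and set $\Psi(\cH) := \{e_\lambda : \lambda \in H_E\}$. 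This is an elation group of order $p^{nd}=p^m$ and $\GF(p^n)$-dimension $d$, as required.

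To verify that $\Phi\circ\Psi$ is the identity, I would fix any affine point $\x$ and reuse the coordinate computation from the proof of Lemma \ref{lem_2}: for $\y = (1,x_1,\ldots,x_{r-1}+\lambda) \in \x^{\Psi(\cH)}$, the line $\subgen{\x^*,\y^*}$ meets $\z^*$ at the point of $\GF(p^n)$-coordinates $(\lambda_1,\ldots,\lambda_{d'}) \in H$, and as $\lambda$ ranges over $H_E$ these points exhaust $\cH$. For $\Psi\circ\Phi$, starting from $E$ with additive subgroup $H_E$, applying $\Phi$ produces the $(d-1)$-subspace whose underlying vector space is exactly the image of $H_E$ in $\GF(p^n)^{d'}$, and $\Psi$ then pulls this back to $H_E$, recovering $E$. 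The one point requiring care, and what I expect to be the main obstacle in making this airtight, is recording that the identification $\GF(p^h) \cong \GF(p^n)^{d'}$ induced by the fixed basis is $\GF(p^n)$-linear, so that $d$-dimensional $\GF(p^n)$-subspaces on each side correspond bijectively with matching dimension; once this compatibility is noted the two maps are manifestly mutual inverses.
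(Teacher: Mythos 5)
Your proposal is correct and follows essentially the same route as the paper: the paper's "proof" is precisely the combination of Lemma \ref{lem_2} (your forward map $\Phi$) with the construction immediately preceding the proposition, where a $(d-1)$-subspace $\cH$ of $\z^*$ cut out by (\ref{eq_1}) is pulled back to the group $\{e_\lambda:\mathbf\lambda\in H\}$ and one checks $\langle\x^*,\cH\rangle=(\x^E)^*$ (your $\Psi$ and the inverse check). Your extra remark on the $\GF(p^n)$-linearity of the identification $\GF(p^h)\cong\GF(p^n)^{d'}$ is the same compatibility the paper uses implicitly.
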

%
%
%%%%%%%%%%%%%%%%%%%%%%%%%%%%%%%%%%%%%%%%%%%%%%%%%%%%%%%%%%%%%%%%%
%%%%%%%%%%%%%%%%%%%%%%%%%%%%%%%%%%%%%%%%%%%%%%%%%%%%%%%%%%%%%%%%%
%%%%%%%%%%%%%%%    Singer groups  of projective geometries %%%%%%%%
%%%%%%%%          and elation groups of $\PG(r-1,p^h)$    %%%%%%%%
%%%%%%%%%%%%%%%%%%%%%%%%%%%%%%%%%%%%%%%%%%%%%%%%%%%%%%%%%%%%%%%%%
%%% %%%%%%%%%%%%%%%%%%%%%%%%%%%%%%%%%%%%%%%%%%%%%%%%%%%%%%%%%%%%%%

\section {Elation groups of $\PG(r-1,p^h)$ and orbits of Singer groups} \label{sec_2}

 Let $V=\langle v_1,\ldots,v_r\rangle$ be a vector space of dimension $r$ over $\GF(p^h)$. 

Here we review some results on Singer groups of vector and projective spaces.
 
Denote by $\frob$ the Frobenius automorphism of $\GF(p^{hr})$ over $\GF(p^h)$. Let $\mu$ be a generator of the cyclic group $\GF(p^{hr})^*$ and $m(X)=X^r+a_{r-1}X^{r-1}+\ldots+a_1X+a_0$ be its minimal polynomial over $\GF(p^h)$. The companion matrix $C$ of $m(X)$ is
\[
\begin{pmatrix}
0 & 0 &\ldots & 0 & -a_0\\
1 & 0 &\ldots & 0  & -a_1\\
0 & 1 & \ldots & 0 & -a_2\\
\vdots &\vdots&\ddots & &\\
0 &	0 &\ldots&1&-a_{r-1}	
\end{pmatrix}.
\]

The linear transformation $\sigma:V\longrightarrow V$ whose matrix with respect to the basis $v_1,\ldots,v_r$ is $C$ is nonsingular and the cyclic group $S=\langle\sigma\rangle$ generated by $\sigma$ has order $p^{hr}-1$ and acts transitively on the nonzero vectors of $V$. The group $G$ s called a {\em Singer cyclic group} of $V$; see, e.g., \cite{hu},\cite{s}.

On finite projective space $\PG(r-1,p^h)$,  the Singer cycle $\sigma$ induces  a linear collineation which we still denote by $\sigma$ and the group $S$ acts regularly on points (and hyperplanes) of the space. Such a group is called a {\em Singer group} of $\PG(r-1,p^h)$. 

In the literature, the actions of Singer groups are usually considered on points
and lines. In \cite{dgg}, the geometric structure of Singer line orbits in $\PG(3, q)$ were investigated. This was extended to Singer plane orbits also in higher dimensional
projective spaces; see \cite{ems}.

In his paper \cite{kd} Drudge has studied the action of the Singer group on the $(t-1)$-dimensional subspaces of $\PG(r-1,p^n)$ for arbitrary $t$ and $r$; he determined the possible orbit sizes and how many orbits of each type occur.

{}For the convenience of the reader we repeat the relevant material from \cite{kd} without proofs, thus making our exposition self-contained.

A $(t-1,k)-${\em cover} of $\PG(r-1,p^h)$ is a set $\cK$ of $(t-1)-$subspaces such that every point of $\PG(r-1,p^h)$ is on exactly $k$ elements of $\cK$; so a $(t-1)-$spread is a $(t-1,1)-$cover. The $(t-1,k)-$covers generalize to subspaces of arbitrary dimension the $k-$covers of lines introduced by Ebert in \cite{e}.

For any $t\ge1$, $\theta_t$ will denote the numbers of points of $\PG(t-1,p^h)$, that is
$\theta_t=p^{h(t-1)}+p^{h(t-2)}+\ldots+p^h+1.$

\begin{proposition} \cite{kd}
\begin{itemize}
\item[i.] A Singer group $S$ of $\PG(r-1,p^h)$ has an orbit which is a $(t-1)-$spread if and only if $t|r$. In this case there is exactly one such orbit, say  $\cS$. The $S-$stabilizer of any $X\in\cS$ is $Stab_S(X)=\langle\sigma^{\theta_r/\theta_t}\rangle$. $\cS$ is a Desarguesian $(t-1)-$spread and the factor group $S/Stab_S(X)$ is a Singer group of the corresponding $\PG(r/t-1,p^{ht})$.
 \item[ii.] The $S-$stabilizer of a $(t-1)-$dimensional subspace has size $\theta_u$, and is therefore $\langle\sigma^{\theta_r/\theta_u}\rangle$ for some $u|(t,r)$. In other words, each orbit of $S$ on $(t-1)-$subspaces is a $(t-1,\theta_t/\theta_u)-$cover of $\PG(r-1,p^h)$ for some $u|(t,r)$.
\end{itemize}
\end{proposition}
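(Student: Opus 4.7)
The plan is to realise $V=V(r,p^h)$ concretely as the field $\GF(p^{hr})$, regarded as an $r$-dimensional $\GF(p^h)$-space, so that the Singer cycle $\sigma$ becomes multiplication by a generator $\mu$ of $\GF(p^{hr})^{*}$. In $\PGL(r,p^h)$ the image of $\sigma$ has order $\theta_r$, the kernel of $\GL\to\PGL$ meeting $\langle\sigma\rangle$ in $\GF(p^h)^{*}$. A $(t-1)$-subspace $X$ corresponds to a $t$-dimensional $\GF(p^h)$-subspace $W\subseteq\GF(p^{hr})$, and $\sigma^k$ stabilises $X$ if and only if $\mu^k W=W$. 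The first key step is the algebraic lemma that
\[
R=\{\alpha\in\GF(p^{hr}) : \alpha W\subseteq W\}
\]
is a subfield of $\GF(p^{hr})$ containing $\GF(p^h)$: as a finite integral subdomain of a field it is itself a field, and it contains $\GF(p^h)$ because $W$ is $\GF(p^h)$-linear. Hence $R=\GF(p^{hu})$ with $u\mid r$, and since $W$ is then an $R$-module we also get $u\mid t$, so $u\mid(t,r)$.

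With this in hand, part (ii) drops out. The $\GL$-stabiliser of $W$ is $R^{*}$ of order $p^{hu}-1$, so its image in $S\subseteq\PGL$ has order $(p^{hu}-1)/(p^h-1)=\theta_u$; since $S$ is cyclic of order $\theta_r$, this subgroup must be $\langle\sigma^{\theta_r/\theta_u}\rangle$. Orbit-stabiliser gives $|X^S|=\theta_r/\theta_u$, and the transitivity of $S$ on the points of $\PG(r-1,p^h)$ makes the number $k$ of orbit elements through a fixed point independent of that point; double counting $|X^S|\theta_t=k\theta_r$ then forces $k=\theta_t/\theta_u$, which is precisely the asserted cover parameter.

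For part (i), take $t\mid r$ and $W_0=\GF(p^{ht})$: its $\GF(p^{hr})^{*}$-translates are the cosets $\alpha\GF(p^{ht})$, which partition the nonzero vectors of $\GF(p^{hr})$ and therefore project to a $(t-1)$-spread $\cS$ that is a single $S$-orbit; its stabiliser is $\langle\sigma^{\theta_r/\theta_t}\rangle$ by (ii) with $u=t$. For uniqueness, any spread orbit has size $\theta_r/\theta_t$, so its elementwise stabiliser is the unique cyclic subgroup of order $\theta_t$, again $\langle\sigma^{\theta_r/\theta_t}\rangle$; as this generator acts as multiplication by a generator of $\GF(p^{ht})^{*}$, its invariant $t$-subspaces are exactly the $1$-dimensional $\GF(p^{ht})$-subspaces, all of which lie in the same $S$-orbit. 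Conversely, the existence of a spread orbit forces $\theta_t\mid\theta_r$, hence $t\mid r$. The spread is Desarguesian because the $(mt-1)$-subspaces it fills are exactly the $m$-dimensional $\GF(p^{ht})$-subspaces of $\GF(p^{hr})$, and these under inclusion form $\PG(r/t-1,p^{ht})$; moreover the cyclic quotient $S/\langle\sigma^{\theta_r/\theta_t}\rangle$, of order $\theta_r/\theta_t$, acts regularly on the elements of $\cS$, i.e.\ on the points of $\PG(r/t-1,p^{ht})$, and is therefore a Singer group of that space.

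The main obstacle is really only the algebraic lemma identifying $R$ as a subfield; once it is in place, both parts collapse to short orbit-stabiliser and double-counting arguments. The only technical point to watch throughout is consistent bookkeeping when passing from $\GL$, where the relevant subgroups have order $p^{hu}-1$, down to $\PGL$, where they have order $\theta_u$.
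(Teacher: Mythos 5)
Your argument is correct, but note that the paper itself gives no proof of this statement: it is quoted verbatim from Drudge \cite{kd} ("we repeat the relevant material from \cite{kd} without proofs"), so the only comparison available is with the cited source, and your route --- identifying $V(r,p^h)$ with $\GF(p^{hr})$, showing that the multiplier ring $R=\{\alpha:\alpha W\subseteq W\}$ of a $t$-dimensional subspace $W$ is a subfield $\GF(p^{hu})$ with $u\mid(t,r)$, and then running orbit--stabilizer and a double count --- is essentially the standard field-model argument used there. All the main steps check out: the stabilizer computation in $\PGL$ (image of $R^{*}$, of order $\theta_u$, hence the unique subgroup $\langle\sigma^{\theta_r/\theta_u}\rangle$ of the cyclic group $S$), the cover parameter $\theta_t/\theta_u$, the spread $\{\alpha\GF(p^{ht})\}$ for $t\mid r$, and the uniqueness argument via the $T$-invariant $t$-subspaces being exactly the $1$-dimensional $\GF(p^{ht})$-subspaces. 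The only place you lean on a phrase rather than a reason is the final claim that a cyclic group acting regularly on the points of $\PG(r/t-1,p^{ht})$ ``is therefore a Singer group'': as stated that would need a nontrivial theorem, but in your setting it is immediate and should be said explicitly --- the quotient geometry is the lattice of $\GF(p^{ht})$-subspaces of $\GF(p^{hr})$, and the action of $\sigma$ on it is induced by multiplication by the primitive element $\mu$, which is $\GF(p^{ht})$-linear, so the quotient group is a Singer group of $\PG(r/t-1,p^{ht})$ by the very definition used in the paper.
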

\begin{remark} \label{rem_2}
Suppose $t|r$ and let $\cS$ be the unique $(t-1)-$spread orbit of $S$. Let $\cK$ be a $(t-1,\theta_r/\theta_u)-$cover of $\PG(r-1,p^h)$ for some $u|(t,r)$. Then each element of $\cS$ intersects elements of $\cK$ in a $\PG(u-1,p^h)$. Such elements form a Desarguesian $(u-1)-$spread which fills every element of $\cK$. Further, this orbit is isomorphic to $\PG(r/u-1,p^{hu})$. 
\end{remark}

In the remainder of this section we study the relation between equivalent $(\z,A)-$elation groups and orbits of Singer groups of projective geometries.

Let $E$ and $\bar E$ be two equivalent $(\z,A)-$elation groups of $\PG(r-1,p^h)$. Then  $H_{\bar E}=\alpha H_E$ for some non-zero element $\alpha\in\GF(p^h)$. Assume that $E$ has order $p^m$ and dimension $d=m/n$ over $\GF(p^n)$ i.e., $H_E$ is a $d-$dimensional vector space over $\GF(p^n)$. From Lemma \ref{lem_1} it is clear that $\bar E$ has order $p^m$. Furthermore, it easy to check that also $\alpha H_E$ turns out to be a $d-$dimensional vector space over $\GF(p^n)$ which implies that $\bar E$ has dimension $d$ over $\GF(p^n)$.

In the proof of Lemma \ref{lem_2} we have seen that the projective subspaces defined by $H_E$ and $H_{\bar E}$, which we denote by $\overline H_E$ and $\overline H_{\bar E}$, are precisely the intersections of $(\x^E)^*$ and  $(\x^{\bar E})^*$ with $\z^*$, respectively. Since the multiplicative group of $\GF(p^h)$ over $\GF(p^n)$ is cyclic, multiplication in $\GF(p^h)$ over $\GF(p^n)$ defines a Singer cyclic group $S$ in $\z^*=\PG(h/n-1,p^n)=\PG(d'-1,p^n)$.  This implies that $\overline H_E$ and $\overline H_{\bar E}$ lie  in the same $S-$orbit. 
In particular, from  the definition of minimal dimension of an elation group and Remark \ref{rem_2}, we see that  if $d$ is the minimal dimension of $E$ then such an orbit is a $(d-1,\theta_{d})-$cover of $\z$. We summarize this discussion.

\begin{theorem}\label{th_1}
There exists a 1-1 correspondence from  conjugacy classes of $(\z,A)-$elation groups of order $p^m$ and dimension $d=m/n$ of $\PG(r-1,p^h)$ into  the orbits of a Singer group of  $\PG(h/n-1,p^n)$ on $(d-1)-$dimensional subspaces.
This correspondence induces a 1-1 correspondence from  conjugacy classes of  $(\z,A)-$elation groups of $\PG(r-1,p^h)$ of minimal dimension $d=m/n$ into  orbits of a Singer group of  $\PG(h/n-1,p^n)$ which are $(d-1,\theta_{d})-$covers. 
\end{theorem}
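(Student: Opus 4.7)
The plan is to splice together three facts already established: Lemma \ref{lem_1} (conjugacy equals equivalence of the additive subgroups $H_E$), Proposition \ref{rem_1} (elation groups of dimension $d$ correspond bijectively to $(d-1)$-subspaces of $\z^*$), and the observation that scalar multiplication by an element of $\GF(p^h)^*$ on $\GF(p^h)$ viewed as a $\GF(p^n)$-vector space realises a Singer cyclic group of $\z^*$.

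First I would set $d'=h/n$ so that $\z^*\cong\PG(d'-1,p^n)$, and identify its underlying vector space with $\GF(p^h)$ regarded as $\GF(p^n)$-vector space. Under this identification the subspace $\overline H_E$ attached to $E$ by Proposition \ref{rem_1} is literally the projectivisation of $H_E\subseteq\GF(p^h)$. Combining Lemma \ref{lem_1} and Proposition \ref{rem_1}, the conjugacy class of $E$ corresponds to the set $\{\,\overline{\alpha H_E}:\alpha\in\GF(p^h)^*\,\}$ of projectivisations of all its equivalents. I would then argue that the map $x\mapsto\alpha x$ on $\GF(p^h)$ is $\GF(p^n)$-linear, and that as $\alpha$ runs through $\GF(p^h)^*/\GF(p^n)^*$ these maps induce a cyclic group acting regularly on the $\theta_{d'}$ points of $\z^*$; this is exactly the Singer group $S$ of $\z^*$ realised by multiplication in $\GF(p^h)$ over $\GF(p^n)$. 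Hence the equivalence classes of $d$-dimensional $\GF(p^n)$-subspaces of $\GF(p^h)$ correspond bijectively to the $S$-orbits on $(d-1)$-dimensional subspaces of $\z^*$, which gives the first assertion of the theorem.

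For the second assertion I would argue that the minimality of $d=m/n$ forces the $S$-orbit to be a full $(d-1,\theta_{d})$-cover. By Proposition 3.1(ii), any $S$-orbit on $(d-1)$-subspaces of $\z^*=\PG(d'-1,p^n)$ is a $(d-1,\theta_{d}/\theta_u)$-cover for some $u\mid (d,d')$. Suppose $u>1$. By Remark \ref{rem_2}, every $(d-1)$-subspace in such an orbit is filled by a Desarguesian $(u-1)$-spread of $\z^*$; pulling back to $\GF(p^h)$, this says that $H_E$ is closed under multiplication by the subfield $\GF(p^{nu})$, so $H_E$ is a $\GF(p^{nu})$-subspace of $\GF(p^h)$. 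This contradicts the maximality of $\GF(p^n)$ in the definition of minimal dimension. Thus $u=1$ and the orbit is a $(d-1,\theta_{d})$-cover, as claimed; conversely any orbit with $u=1$ comes from an $E$ that is not a $\GF(p^{np})$-space for any prime $p\mid d'/d$, i.e.\ of minimal dimension exactly $d$.

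The main obstacle I anticipate is the careful verification that multiplication by $\GF(p^h)^*$ on $\GF(p^h)$, viewed over $\GF(p^n)$, is precisely the Singer cyclic group of $\z^*$ in the sense used by Drudge (so that Proposition 3.1 and Remark \ref{rem_2} are directly applicable), and the equally careful translation in the last step between ``filled by a $(u-1)$-spread of $\z^*$'' and ``$H_E$ is closed under $\GF(p^{nu})$-scalars'', since this is the place where the field-theoretic notion of minimal dimension is matched with Drudge's combinatorial invariant $u$.
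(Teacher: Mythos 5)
Your proposal follows essentially the same route as the paper: it combines Lemma \ref{lem_1}, Proposition \ref{rem_1} and the realisation of the Singer group of $\z^*$ by multiplication in $\GF(p^h)$ over $\GF(p^n)$, then invokes Drudge's orbit description together with Remark \ref{rem_2} for the minimal-dimension case, which is exactly how the paper argues (your $u>1$ contradiction merely makes explicit what the paper leaves to the definition of minimal dimension). Only a cosmetic slip in your closing converse: the relevant parameters are the divisors $t>1$ of $\gcd(d,h/n)$, not the primes dividing $d'/d$ (and the letter $p$ is already in use as the characteristic).
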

\begin{corollary}
The number of $(\z,A)-$elation groups in $\PG(r-1,p^h)$ of order $p^m$ and dimension $d=m/n$  is 
\begin{equation}\label{eq_2}
\frac{1}{\theta_{h/n}}\sum_{t|(d,h/n)}{\left({{h/(nt)} \brack {d/t}}_{p^{nt}}\sum_{u|t}{\mu(t/u)\theta_u}\right)};
\end{equation}
here $\mu$ denotes the M\"obius function \cite{a} and ${a \brack b}_{p^{k}}$ the number of $(b-1)-$dimensional subspaces of $\PG(a-1,p^k)$.

In particular, there are exactly 
\begin{equation}\label{eq_3}
\frac{1}{\theta_{h/n}}\sum_{t|(d,h/n)}{\mu(t){{h/(nt)} \brack {d/t}}_{p^{nt}}}
\end{equation}
$(\z,A)-$elation groups in $\PG(r-1,p^h)$ of order $p^m$ and minimal dimension $d$.
\end{corollary}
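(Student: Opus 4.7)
The plan is to apply Theorem \ref{th_1} to identify the count of $(\z,A)$-elation groups of order $p^m$ and dimension $d=m/n$ with the count of orbits of a Singer group $S$ of $\PG(h/n-1,p^n)$ on $(d-1)$-dimensional subspaces. Let $c_u$ denote the number of such orbits whose $S$-stabilizer has order $\theta_u$, where $u$ runs over the divisors of $(d,h/n)$ permitted by Drudge's proposition; the quantity sought is $C=\sum_{u|(d,h/n)} c_u$.

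The technical heart is the identity, valid for each $t|(d,h/n)$,
\begin{equation*}
{h/(nt)\brack d/t}_{p^{nt}}=\sum_{\substack{u|(d,h/n)\\ t|u}}c_u\,\frac{\theta_{h/n}}{\theta_u}.
\end{equation*}
Since $S$ is cyclic, its unique subgroup of order $\theta_t$ is $K_t:=\langle\sigma^{\theta_{h/n}/\theta_t}\rangle$; by Drudge this coincides with the elementwise stabilizer of the unique Desarguesian $(t-1)$-spread orbit $\cS_t$, on each element of which $K_t$ acts regularly. Hence a $(d-1)$-subspace $X$ is fixed by $K_t$ if and only if $\cS_t$ fills $X$: any point of $X$ in a spread element $Y$ has its whole $K_t$-orbit, which is all of $Y$, inside $X$. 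By the Andr\'e/Bruck-Bose representation based on $\cS_t$, such subspaces are in bijection with $(d/t-1)$-subspaces of $\PG(h/(nt)-1,p^{nt})$, whose total is the Gaussian binomial on the left. Moreover, under the quotient Singer action $S/K_t$ on this space, an $S$-orbit of stabilizer order $\theta_u$ (with $t|u$) corresponds to a quotient orbit of stabilizer order $\theta_u/\theta_t=(p^{nu}-1)/(p^{nt}-1)$ and contributes $\theta_{h/n}/\theta_u$ subspaces; summing over $u$ with $t|u$ yields the identity.

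Möbius inversion on the divisor lattice of $(d,h/n)$ then gives
\begin{equation*}
c_u=\frac{\theta_u}{\theta_{h/n}}\sum_{\substack{t|(d,h/n)\\u|t}}\mu(t/u){h/(nt)\brack d/t}_{p^{nt}},
\end{equation*}
and summing this over $u|(d,h/n)$ while interchanging sums delivers (\ref{eq_2}).

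For the minimal-dimension count, the observation is that $d=m/n$ is the minimal dimension of $E$ precisely when the associated subspace $\overline H_E$ of $\z^*$ has trivial $S$-stabilizer (case $u=1$): a nontrivial stabilizer of order $\theta_u$ would, by Remark \ref{rem_2}, force $\overline H_E$ to be filled by a Desarguesian $(u-1)$-spread, which is tantamount to $H_E$ carrying a $\GF(p^{nu})$-vector-space structure, contradicting the maximality of $\GF(p^n)$. Therefore this count is exactly $c_1$, which by the inversion formula at $u=1$ is (\ref{eq_3}). The main obstacle is the stabilizer/spread identification and the rescaling of stabilizer orders under the Bruck-Bose quotient; once both are secured the conclusion is routine Möbius bookkeeping.
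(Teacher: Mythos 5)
Your proof is correct, but it follows a genuinely different route from the paper's. The paper's own proof is essentially a citation: formula (\ref{eq_2}) is taken verbatim from Theorem 2.1 of \cite{kd} as the number of Singer orbits on $(d-1)$-subspaces of $\PG(h/n-1,p^n)$, formula (\ref{eq_3}) is the $u=1$ term of that theorem (the orbits that are $(d-1,\theta_d)$-covers), and the conclusion then follows from Theorem \ref{th_1}. You instead start from Theorem \ref{th_1} and re-derive Drudge's counting formula using only the structural facts quoted (without proof) in the Proposition of Section \ref{sec_2}: the unique subgroup $K_t$ of order $\theta_t$ of the cyclic Singer group is the stabilizer of the Desarguesian $(t-1)$-spread orbit and acts regularly on each spread element, so the $(d-1)$-subspaces fixed by $K_t$ are exactly those filled by the spread, which the quotient (Andr\'e/Bruck--Bose) geometry puts in bijection with the $(d/t-1)$-subspaces of $\PG(h/(nt)-1,p^{nt})$; stratifying by stabilizer order $\theta_u$ with $t\mid u$ gives your fixed-subspace identity, and M\"obius inversion over the divisor lattice of $(d,h/n)$ followed by interchanging the sums yields (\ref{eq_2}), with $c_1$ giving (\ref{eq_3}). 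All of these steps are sound (in particular $K_t\subseteq K_u$ iff $t\mid u$, and orbits with stabilizer $K_u$ have size $\theta_{h/n}/\theta_u$), so in effect you re-prove the relevant case of Drudge's theorem, making the corollary self-contained where the paper's proof is a two-line appeal to \cite{kd}. Two minor remarks: for the minimal-dimension statement you write ``precisely when the stabilizer is trivial'' but justify only one implication; the converse (a $\GF(p^{nu})$-structure on $H_E$ with $u>1$ makes $\overline H_E$ invariant under the subgroup of order $\theta_u$) is equally immediate, or you may simply invoke the second sentence of Theorem \ref{th_1}, which is what the paper does. Also, like the paper, what is actually being counted via Theorem \ref{th_1} are equivalence (conjugacy) classes of $(\z,A)$-elation groups, so your identification of the two counts mirrors the paper's own step exactly.
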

\begin{proof}
In  \cite[Theorem 2.1]{kd} the number of orbits of a Singer group of $\PG(h/n-1,p^n)$ on the set of $(d-1)-$dimensional subspaces was determined and it is precisely given by the formula (\ref{eq_2}). 

Putting $u=1$ in the formula (2) of Theorem 2.1 in \cite{kd} one gets the number of orbits of a Singer group of $\PG(h/n-1,p^n)$ on the set of $(d-1)-$dimensional subspaces which are $(d-1,\theta_{d})-$covers, that is
\[
\frac{1}{\theta_{h/n}}\sum_{t|(d,h/n)}{\mu(t){{h/(nt)} \brack {d/t}}_{p^{nt}}}.
\]

The results are then an immediate consequence of Theorem \ref{th_1}.
\end{proof}

\section{Concluding remarks}
 
Elation groups play a central role in finite geometries as they can appear as automorphism groups of relevant objects such as unitals, blocking sets and maximal arcs in projective planes. 

While describing the geometry of the orbits of $(\z,A)-$elation groups of a projective geometry our feeling is that our results may be useful to improve the knowledge (from a group-theoretic point of view) of geometric objects with such a automorphism group.

With the notation introduced in the previous sections, let $E$ be a $(\z,A)-$elation group of order $p^m$ and  minimal dimension $d=m/n$ of $\PG(r-1,p^h)$ and let $\cB$ be a set of points which is invariant under $E$. It is evident that the geometry of the representation $\cB^*$ of $\cB$ depends on the intersection behaviour  of $\cB$ with the center $\z$ and the axis $A$. 

 In the case that $\z$ is a point of $\cB$  and $A$ is a tangent hyperplane  of $\cB$ at $\z$ we see that $\cB^*$ is a cone with vertex a $(d-1)-$dimensional subspace which belongs to a $(d-1,\theta_d)-$cover of $\z^*$. It turns out that $\cB$ is union of $\GF(p^n)-$linear sets; see \cite{op}. 

If $\cB$ is a unital in $\PG(2,p^{2h})$ i.e, a set of $p^{3h}+1$ points  such that each line meets $\cB$ in 1 or $p^h+1$ points, with  a $(\z,A)-$elation automorphism group $E$, then it easy to see that $\z$ must be a point of $\cB$ and $A$ is a tangent. In fact, if we assume that $\z\notin \cB$ then there is a line $\ell$ on $\z$ fixed by $E$ and intersecting $\cB$ in $p^h+1$ points. Since $E$ also fixes $\cB$ then $E$ must fix $\ell\cap\cB$. This implies that $|E|=p^m$ should divide $p^h+1$, a contradiction. Then $\z\in\cB$. Assume now that the axis $A$ meets $\cB$ in $p^h+1$ points and let $P$ be a point of $A\cap\cB$ but not $\z$. It is well known that on $P$ there is a unique  tangent $t$ of $\cB$. But $t$ is not fixed by $E$ (because $t$ is not on $\z$), contrary to the uniqueness  of $t$. We conclude that $A$ must be the tangent at $\z$ of $\cB$.

Examples of such unitals are the Buekenhout-Metz unitals  which are represented in $\PG(4,p^{h})$  by cones projecting  an ovoid  in a suitable 3-dimensional subspace from a point of $\z^*$  not contained in the 3-dimensional subspace \cite{b} \cite{m}. In $\PG(2,p^{h})$, the latter unitals are invariant under  an elation group of order $p^h$ and minimal dimension 1 that is,  they are union of $p^{h}$ Baer sublines through $\z$. It should also be noted that in the papers  \cite{cq} and \cite{ckp} it was proved that if $\cB$ is a unital of $\PG(2,p^{h})$, where $p^{h/2}>2$, such that each of the $p^{h}$ secant lines through $\z$ meets $U$ in a Baer subline, then $\cB$ is a Buekenhout-Metz unital. 

To our knowledge not much is known for maximal arcs and blocking sets with the same invariance property.

Generalizing the Buekenhout-Metz construction,  Mazzocca and Polverino constructed minimal blocking sets in $\PG(2,p^{hn})$ from cones of $\PG(2n,p^h)$ \cite{mp}. Such cones have an $s-$dimensional subspace $\Omega$ of $\z^*$, with $0\le s\le n-2$, and generators $(s+1)-$dimensional affine subspaces through $\Omega$. It turns out that such blocking sets are left invariant by a $(\z,A)-$elation group of order $p^{h(s+1)}$ and dimension $s+1$.  

Something can  also be said about  $(k,n)-$maximal arcs i.e.,  sets of $k$ points of $\PG(2,p^h)$ meeting every line in just $n$ points or in none at all \cite{h}. If $\cK$ is a non-trivial maximal arc i.e., $\cK$ is neither a single point nor an affine plane, it is well known that $|\cK|=p^hn-p^h+n$. In particular, if $\alpha$ denotes the constant number of secant lines on a point not in $\cK$, we have $(p^h+1-\alpha)n=p^h$ with $\alpha>1$. Then  $n$ is a power of $p$ and $(\alpha,p)=1$.

The question of existence of maximal arcs in $\PG(2,p^h)$ has a negative answer for $p$ odd, while it has been settled in the affirmative for $p=2$; see   \cite{bbm}, \cite{de}, \cite{rm}.

\begin{proposition}
Let $\cK$ be a maximal $(k,n)-$arc of $\PG(2,2^h)$ which is invariant under a $(\z,A)-$elation group $E$ of order $2^m$, $1\le m\le h-1$. Then $\z\not\in\cK$ and $A\cap\cK\neq\emptyset$.
\end{proposition}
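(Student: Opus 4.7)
The plan is to derive a contradiction from each of the two alternative hypotheses, exploiting the semiregular action of $E$ on the affine plane $\PG(2,2^h)\setminus A$ together with the fact that $n$ is a power of $2$. I take $\cK$ to be non-trivial, so that $n=2^t$ with $1\le t\le h-1$ and the formula $|\cK|=2^h n-2^h+n$ applies as in the discussion preceding the proposition.

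For the claim $\z\notin\cK$, suppose to the contrary that $\z\in\cK$ and pick any line $\ell\ne A$ through $\z$; such a line exists since $|A|=2^h+1\ge 3$. Every line through a point of a maximal arc is a secant, so $|\ell\cap\cK|=n$, and the $n-1$ points of $\ell\cap\cK$ distinct from $\z$ lie outside $A$. Since $E$ stabilises $\ell$ (a line through its centre) and acts semiregularly off the axis, $|E|=2^m$ must divide $n-1$. But $n-1$ is odd, which forces $m=0$ and contradicts $m\ge 1$.

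For the claim $A\cap\cK\ne\emptyset$, suppose to the contrary that $A\cap\cK=\emptyset$ and pick $Q\in A\setminus\{\z\}$, so that $Q\notin\cK$. The number of secants of $\cK$ through $Q$ equals $\alpha=2^h+1-2^{h-t}$; since $h-t\ge 1$, both $2^h$ and $2^{h-t}$ are even, and hence $\alpha$ is odd. Next I examine the $E$-action on the pencil at $Q$: the line $A$ is fixed, while each of the other $2^h$ lines through $Q$ neither equals $A$ nor contains $\z$, because the unique line joining $Q$ and $\z$ is $A$ itself. By the classical fact that the fixed lines of any non-identity perspectivity are precisely its axis and the lines through its centre, $E$ acts freely on these $2^h$ lines. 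Since $A$ is external under our assumption, all $\alpha$ secants through $Q$ lie among them and split into $E$-orbits of size $2^m$, whence $2^m\mid\alpha$, contradicting the parity of $\alpha$ for $m\ge 1$. The uniform mechanism in both steps is that semiregularity of $E$, applied either to affine points of a fixed secant line or to a pencil of non-axial lines, forces $2^m$ to divide an odd integer (namely $n-1$, respectively $\alpha$); I expect the freeness of the $E$-action on the pencil at $Q$ to be the main subtlety requiring care.
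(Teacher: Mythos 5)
Your proof is correct and follows essentially the same route as the paper: in both parts one uses that $E$ fixes $A$ pointwise and acts semiregularly elsewhere, so $2^m$ would have to divide the odd numbers $n-1$ (points of a secant through $\z$ off the axis) and $\alpha$ (secants through a point of $A\setminus\{\z\}$), respectively. Your explicit justification of the free action on the non-axial lines of the pencil at $Q$ is a point the paper leaves implicit, but the argument is the same.
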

\begin{proof}
First we notice that $n=2^{m+l}$, with $0\le l\le h-m-1$. Hence $\alpha=2^h+1-2^{h-m-l}$.

Assume $\z\in\cK$. Then $A$ must be a secant of $\cK$. Fix a line $\ell$ on $\z$. Then $\ell$ meets $\cK$ in $n-1$ points other then $\z$. Since $\ell\cap\cK$ is union of orbits of $E$ we must have $2|n-1$, a contradiction. We conclude that $\z\not\in\cK$.

Let $A$ be skew with $\cK$. Let $P$ be a point on $A\setminus\{\z\}$. As each secant line on $P$ meets $\cK$ in $n$ points we get $\alpha=2^h+1-2^h/n\ge n$. It follows easily that if $\alpha=n$, then $n\in\{1,2^h\}$ which implies that $\cK$ is a trivial  maximal arc. Then $\alpha>n$. The group $E$ splits the $\alpha$ lines on $P$ into orbits of length $|E|=2^m$ which implies $2^m|\alpha$, a contradiction.
\end{proof}

In the paper \cite{hp}, stabilizers of some known maximal arcs in Desarguesian projective planes are determined. In particular,  we see  from Theorem 2.3 of \cite{hp} that Denniston maximal arcs \cite{de} have not elations if $n>2$ while Thas 1974 maximal arcs \cite{hp} have no elations by construction. On the other hand, when $n=2$ translation hyperovals are stabilized by an involutory elation \cite[Theorem 4.2]{hp}.

\noindent
{\bf Author's Address}

\noindent
Alessandro Siciliano {\em (corresponding author)}\\[.03in]
Dipartimento di Matematica ed Informatica\\ [.03in]
Universit\`a degli Studi della Basilicata\\[.03in]
Via dell'Ateneo Lucano, I-85100 Potenza , Italy\\[.03in]
{\em e-mail:} alessandro.siciliano@unibas.it

\bigskip
\noindent
Nicola Durante \\[.03in]
Dipartimento di Matematica ed Applicazioni\\ [.03in]
Universit\`a degli Studi di Napoli "Federico II"\\[.03in]
Complesso di Monte S. Angelo - EdiÞcio T \\[.03in]
via Cintia, I-80126 Napoli, Italy. \\[.03in]
{\em e-mail:} ndurante@unina.it

\end{document}